\newcommand{\C}{\mathbb{C}}
\newcommand{\todo}[1]{}
\newlength{\extendaxesby}\setlength{\extendaxesby}{.4cm}
\newtheorem{thm}{Theorem}
\newtheorem{theorem}{Theorem}
\newtheorem{proposition}[thm]{Proposition}
\theoremstyle{definition}
\newtheorem{definition}[thm]{Definition}
\renewcommand{\Re}{\mathop{\rm Re}\nolimits}
\renewcommand{\Im}{\mathop{\rm Im}\nolimits}
\def\beq{\begin{equation}}
\def\eeq{\end{equation}}
\newcommand{\CC}[1]{\mathbb{C}^{#1}}
\newcommand{\lr}{\longrightarrow}
\numberwithin{equation}{section}
\def\Label#1{\label{#1}{\sf (#1)}~}
\newcommand{\om}{\omega}
\newcommand{\ba}{\mathcal{G}}
\newcommand{\fb}{\mathfrak b}
\newcommand{\bz}{\bar z}
\newcommand{\bl}{\bar l}
\newcommand{\bP}{\bar P}
\newcommand{\bX}{\bar X}
\numberwithin{table}{section}
\begin{document}
\title[The jet transcendence degree of a real hypersurface]{The jet transcendence degree of a real hypersurface  and Huang-Ji-Yau Conjecture}
\author{Jan Gregorovi\v{c}}
\address{Jan Gregorovi\v{c}, Department of Mathematics, Faculty of Science, University of Ostrava, 701 03 Ostrava, Czech Republic, and Institute of Discrete Mathematics and Geometry, TU Vienna, Wiedner Hauptstrasse 8-10/104, 1040 Vienna, Austria}\email{ jan.gregorovic@seznam.cz}
\author{Ilya Kossovskiy}
\address{Department of Mathematics, Sustech University, China//
Department of Mathematics, Masaryk University, Brno, Czechia//
Institute of Discrete Mathematics and Geometry, TU Vienna, Vienna, Austria
}
\email{kossovskiyi@math.muni.cz}
\subjclass[2010]{}
 \thanks{The authors were supported by the GACR grant 22-15012J and by Austrian Science Fund (FWF) grant P34369}
\maketitle
\begin{abstract}
We investigate the problem of holomorphic algebraizibility for real hypersurfaces in complex space. We introduce a new invariant of a (real-analytic) Levi-nondegenerate hypersurface called {\em the jet transcendence degree}. Using this invariant, we solve in the negative the Conjecture of Huang, Ji and Yau on the algabraizability of real hypersurfaces with algebraic syzygies.
\end{abstract}

\tableofcontents

\section{Introduction}

A real hypersurface $M\subset\CC{n+1}$ is called {\em algebraizable} near $p\in M$, if it is CR-diffeomorphic, locally near $p$, to an algebraic real hypersurface. The study of algebraizability of real-analytic hypersurfaces has a long history. We refer the reader here to  e.g. the work of Forstneric \cite{forstnericalg} and references therein. This study is motivated by the fact that much more tools and results for studying holomorphic maps of real submanifolds and domains in complex space are available in the algebraic setting (in particular, all CR-maps in the algebraic setting are algebraic as well, under very mild assumptions on the CR-manifolds; we refer here to the original paper \cite{websteralg} of Webster on the subject, and a more recent paper \cite{zaitsevalg} of Zaitsev and references therein). At the same time, being algebraizable is a very rare property of an analytic CR-manifold, i.e. "most"\,  real-analytic hypersurfaces are {\em not} algebraizable, see Forstneric \cite{forstnericalg} for a general argument here and Zaitsev \cite{zaitsevquad} for more a more explicit (differential-algebraic) description of algebraizable hypersurfaces. 

The problem of detecting algebraizable CR-manifolds among general analytic ones remains widely open, {\em even in the case of Levi-nondegenerate hypersurfaces}. A certain advance in this direction is due to Huang, Ji and Yau who obtained in \cite{HJY01} an obstruction for submanifold $M$ to be algebraizable. They observed the following:

\smallskip

{\it Let $P_1,\dots,P_n$ be a functionally independent set of holomorphic invariants of the Segre family corresponding to an analytic CR-manifold $M\subset \C^N$. If $M$ is algebraizable, then for each further invariant $P$ functionally dependent on $P_1,\dots,P_n$, there is a polynomial $R_P$ such that $R_P(P,P_1,\dots,P_n)=0$.}

\smallskip

In other words, {\em syzygies  (functional relations between invariants) associated with an algebraizable CR-manifold are necessarily algebraic}.

Let us recall that the holomorphic invariants of the Segre family of a Levi-nondegenerate hypersurface $M\subset\CC{N}$ are the structure functions of a Cartan connection $\omega_{\C}$ of \cite{C75,CM74} on a holomorphic $(N^2+2N)$-dimensional bundle $\mathcal{Y}$ over the complexification of $M$. The curvature invariants of $\omega_{\C}$ and the corresponding derived invariants (as functions on  $\mathcal{Y}$) then allow to work out the necessary condition of Huang, Ji and Yau for algebraizibility. Arguing in this manner, the authors in \cite{HJY01} construct an {\em explicit} real-analytic hypersurface $M\subset\CC{2}$ for which the syzygies are not algebraic, so that $M$ is not algebraizable as a corollary. They conclude by the following
\smallskip

\noindent{\bf Conjecture} (Huang-Ji-Yau, 2007, \cite{HJY01}). Let $M\subset\CC{N}$ be a real-analytic Levi-nondegenerate hypersyrface for which all the syzygies arising from the complete system of Cartan-Chern-Moser invariants are algebraic. Then $M$ is (locally) algebraizable.

\smallskip

The main goal of this paper is to resolve the above Conjecture in the negative. We do so by providing  a different than in \cite{HJY01} obstruction for algebraizibility, which is the positivity of the {\em jet transcendence degree} of a Levi-nondegenerate hypersurface. The respective (integer-valued) invariant of  a hypersurface is introduced and studied in Section 3. We show that {\em for an algebraizable hypersurface, its jet transcendence degree  vanishes} (\autoref{obstruct}). At the same time, we are able to present a hypersurface in $\CC{2}$, for which the  jet transcendence degree is {\em positive}, while all the Cartan-Chern-Moser syzygies are {\em algebraic}. This resolve the Conjecture in the negative.

\begin{theorem}[Main Theorem]
Suppose a rigid real-analytic hypersurface 
\[\Re w=H(z,\bz)\]
 in $\C^2$ satisfies that $H_{z \bz}$ is algebraic and everywhere nonzero. Then all its Cartan-Chern-Moser syzygies  are algebraic and thus the obstructions of Huang, Ji and Yau do not occur.

However, if the rigid hypersurface in addition satisfies that $H_{z}(z, \bar z)$ is non-algebraic in $\bar z$, $H_{zz}$ is algebraic in $\bar z$, and $H_{zz \bar z}$ is not identical $0$, then its jet transcendence degree is positive, and thus the hypersurface is not holomorphically equivalent to an algebraic hypersurface in $\C^2$.
\end{theorem}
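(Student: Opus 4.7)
\medskip

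\noindent\textbf{Proof proposal.} The theorem splits naturally into two parts: algebraicity of the Cartan--Chern--Moser syzygies (Part A), and infiniteness of the jet transcendence degree under the extra hypotheses (Part B).

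For Part A, the plan is to exploit the explicit form of the Cartan--Chern--Moser invariants in the rigid case in $\C^2$. Writing the hypersurface as $\Re w = H(z,\bz)$, the Levi form is (up to constants) $H_{z\bz}$, and one knows from Cartan's original computation that the Cartan connection $\omega_\C$ on the associated principal bundle $\mathcal{Y}$ has structure functions that are rational expressions in $H_{z\bz}$ and its $z, \bz$-derivatives (the denominators being nontrivial powers of $H_{z\bz}$, which is nonzero by assumption). Since algebraicity is preserved under partial differentiation, under rational combinations, and under the covariant derivatives along the Cartan connection (whose coefficient functions are themselves of the same rational-algebraic type), every curvature invariant and every iterated covariant derivative of it is an algebraic function on $\mathcal{Y}$. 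The Huang--Ji--Yau syzygies are functional relations among such invariants, hence relations among elements of the algebraic closure of $\C(z,\bz)$: any such relation is necessarily algebraic. This handles Part A.

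For Part B, the strategy is to show that the non-algebraicity of $H_z$ as a function of $\bz$ forces infinitely many jet coefficients, after any biholomorphic change of coordinates, to be algebraically independent over $\C$, contradicting finiteness of the jet transcendence degree of any algebraic target. The first step is to reduce the problem to studying how $H$ transforms under a local biholomorphism $(z,w)\mapsto (f(z,w),g(z,w))$ sending $M$ to a putative algebraic hypersurface $M'$. Writing the transformed defining equation and isolating the anti-holomorphic part, one obtains a functional equation of the form $\Re\,g(z,w) = \widetilde H(f(z,w),\overline{f(z,w)})$ on $M$, where $\widetilde H$ is algebraic. Differentiating along $z$ and then along $\bz$ (using $H_{zz\bz}\not\equiv 0$ to ensure non-degeneracy of the resulting relations), one extracts algebraic relations between $H_z$, $H_{zz}$ and the jets of $f,g$. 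Since $H_{zz}$ is algebraic in $\bz$ whereas $H_z$ is not, no such finite algebraic relation can propagate $H_z$ through: the non-algebraicity must surface in infinitely many independent jet coefficients. Feeding this into the definition of the jet transcendence degree from Section~3, one concludes that this invariant is infinite for $M$.

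The main obstacle is Part B, specifically the bookkeeping of where the transcendental data in $H_z$ sits inside the jet of $M$ modulo the action by biholomorphisms. The delicate point is that one is allowed to change coordinates by an arbitrary biholomorphism (not only rigid ones), so one must rule out the possibility that the transcendence of $H_z$ is absorbed by the holomorphic freedom in the change of variables. Here the assumption that $H_{zz}$ \emph{is} algebraic in $\bz$ is essential: it anchors part of the jet to the algebraic world and prevents a wholesale absorption of $H_z$ into the biholomorphism. Making this rigorous is where one expects the assumption $H_{zz\bz}\not\equiv 0$ to enter, guaranteeing that the algebraic/non-algebraic contrast between $H_{zz}$ and $H_z$ actually produces, at every order, a new transcendental generator in the jet ring.
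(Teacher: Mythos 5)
Your Part~A is essentially the paper's own argument: the pulled-back Cartan connection is assembled from $P=H_{zz\bz}/H_{z\bz}$ and its derivatives, all of which are algebraic once $H_{z\bz}$ is algebraic and nonvanishing, so every curvature invariant and every iterated derivative is algebraic and the syzygies are algebraic relations among algebraic functions. That half is fine.

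Part~B, however, contains a genuine gap at exactly the point that carries the theorem. The whole content of the second assertion is that the jet transcendence degree of $M$ is infinite, and you never establish it: you set up the functional equation $\Re g=\widetilde H(f,\bar f)$ for a hypothetical biholomorphism onto an algebraic target, differentiate, and then assert that ``the non-algebraicity must surface in infinitely many independent jet coefficients,'' explicitly deferring the rigorous step to future work. Moreover, the quantity you are tracking (algebraic independence of jet coefficients of the defining function, modulo biholomorphisms) is not the paper's invariant, which is the transcendence degree in $w'$ of the right-hand side $\Phi$ of the associated ODE $w''=\Phi(z,w,w')$. The paper's route is short and entirely avoids analyzing the biholomorphism: for a rigid $M$ the Segre varieties are $w(z)=-\bar b+2H(z,\bar a)$, so $w'=2H_z(z,\bar a)$; the implicit solution $\bar a=A(z,w')$ is non-algebraic in $w'$ precisely because $H_z$ is non-algebraic in $\bz$; and $\Phi(z,w')=H_{zz}\bigl(z,A(z,w')\bigr)$. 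If $M$ were algebraizable, \autoref{obstruct} (resting on the invariance statement \autoref{invar}, which disposes of all the ``absorption by the change of variables'' issues you worry about) forces $\Phi$ to be algebraic in $w'$; since $H_{zz}$ is algebraic in $\bz$ and $H_{zz\bz}\not\equiv 0$ one may invert $H_{zz}(z,\cdot)$ and conclude that $A$ is algebraic in $w'$ --- a contradiction. Your proposal would need to supply an argument of comparable force at the step you left open; as written, Part~B is a plan rather than a proof.
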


A particular example of a hypersurface satisfying all conditions of the Main Theorem is given by defining equation
\begin{align}\label{exmp1}
\Re w=z \bz+z \arctan(\bz)+\arctan(z)\bz.
\end{align}

Let us remark that the nonalgebraizable rigid submanifold 
$$\Im w=\exp(|z|^2)-1$$ 
of Huang, Ji and Yau has the a positive jet transcendence degree.

\bigskip

We finish the Introduction by the following natural leftover question:

\smallskip

\noindent{\bf Problem}. Let $M\subset\CC{n+1}$ be a real-analytic Levi-nondegenerate hypersyrface for which all the syzygies arising from the complete system of Cartan-Chern-Moser invariants are algebraic and the jet transcendence degree is vanishing. Is $M$ (locally) algebraizable?

\smallskip
\section{Preliminaries}

\subsection{The method of associated differential equations} It was observed by Cartan \cite{C32} and Segre \cite{segre} (see also Webster \cite{websteralg}) that the geometry of a real hypersurface in $\CC{2}$ parallels that of a second order ODE
 \begin{equation}\Label{wzz}
 w\rq{}\rq{}=\Phi(z,w,w\rq{}).
 \end{equation}
More generally,  the geometry of a real hypersurface in $\CC{n+1},\,n\geq 1$, parallels that of a complete second order system of PDEs
\begin{equation}\Label{wzkzl}
w_{z_kz_l}=\Phi_{kl}(z_1,...,z_n,w,w_{z_1},...,w_{z_n}),\quad \Phi_{kl}=\Phi_{lk},\quad k,l=1,...,n.
\end{equation}
  Moreover, {\em in the real-analytic case} this parallel becomes algorithmic by using  the Segre family of a real hypersurface. With any real-analytic Levi-nondegenerate hypersurface $M\subset\CC{n+1},\,n\geq 1$ one can uniquely associate a holomorphic ODE \eqref{wzz} ($n=1$) or a holomorphic PDE system \eqref{wzkzl} ($n\geq 2$). The Segre family of $M$ plays a role of a mediator between the hypersurface and the associated differential equations.  

  The associated differential equation procedure is particularly clear in the case of a Levi-nondegenerate hypersurface in $\CC{2}$. In this case the Segre family is a 2-parameter anti-holomorphic family of  holomorphic curves. It then follows from standard ODE theory that there exists an unique ODE \eqref{wzz}, for which the Segre varieties are precisely the graphs of solutions. This ODE is called \it the associated ODE. \rm

In the general case, both right hand sides in \eqref{wzz},\eqref{wzkzl} appear as functions determining the $2$-jet of a Segre variety as an analytic function of the $1$-jet. More explicitly, we denote the coordinates in $\CC{n+1}$ by 
$$
	(z,w)=(z_1, \ldots, z_n,w).
$$ 
Let then fix $M\subset\CC{n+1}$ to be a smooth real-analytic
hypersurface, passing through the origin, and choose a small neighborhood $U$
 of the origin. In this case
we associate a complete second order system of holomorphic PDEs to $M$,
which is uniquely determined by the condition that the differential equations are satisfied by all the
graphing functions $h(z,\zeta) = w(z)$ of the
Segre family $\{Q_\zeta\}_{\zeta\in U}$ of $M$ in a
neighbourhood of the origin.
To be more explicit we consider the
so-called {\em  complex defining
 equation } (see, e.g., \cite{ber})\,
$w=\rho(z,\bar z,\bar w)$ \, of $M$ near the origin, which one
obtains by substituting $u=\frac{1}{2}(w+\bar
w),\,v=\frac{1}{2i}(w-\bar w)$ into the real defining equation and
applying the holomorphic implicit function theorem.
 The Segre
variety $Q_p$ of a point 
$$x=(a,b)\in U,\,a\in\CC{n},\,b\in\CC{}$$ 
is  now given
as the graph
\begin{equation} \Label{segredf}w (z)=\rho(z,\bar a,\bar b). \end{equation}
Differentiating \eqref{segredf} we obtain
\begin{equation}\Label{segreder} 
	w_{z_j}=\rho_{z_j}(z,\bar a,\bar b),
	\quad
	j=1,\ldots,n. 
\end{equation}
Considering \eqref{segredf} and \eqref{segreder}  as a holomorphic
system of equations with the unknowns $\bar a,\bar b$, 
in view the Levi-nondegeneracy of $M$,
an
application of the implicit function theorem yields holomorphic functions
 $A_1,...,A_n, B$ such that
 \eqref{segredf} and \eqref{segreder} are solved by
\beq \label{AB}
	\bar a_j=A_j(z,w,w\rq{}),\quad
	\bar b=B(z,w,w\rq{}),
\eeq
where we write
$$
	w\rq{}= (w_{z_1},  \ldots, w_{z_n}).
$$
The implicit function theorem applies here because the
Jacobian of the system coincides with the Levi determinant of $M$
for $(z,w)\in M$ (\cite{ber}). Differentiating \eqref{segredf} twice
and substituting the above solution for $\bar a,\bar b$ finally
yields
\begin{equation}\Label{segreder2}
w_{z_kz_l}=\rho_{z_kz_l}(z,A(z,w,w\rq{}),
B(z,w,w\rq{}))=:\Phi_{kl}(z,w,w\rq{}),
\quad
k,l=1, \ldots, n,
\end{equation}
or, more invariantly,
\begin{equation}\Label{segreder2'}
	j^2_{(z,w)} Q_x = \Phi(x, j^1_{(z,w)} Q_x).
\end{equation}
Now \eqref{segreder2} is the desired complete system of holomorphic second order PDEs
denoted by $\mathcal E = \mathcal{E}(M)$.
 \begin{definition}\Label{PDEdef}
 We call $\mathcal E = \mathcal{E}(M)$  \it the system of PDEs 
 associated with $M$. \rm  
  We also regard the collection   $\{\Phi_{ij}\}_{i,j=1}^n$ 
  as {\em the PDE system defining the CR structure} of a Levi-nondegenerate hypersurface $M$.
\end{definition}

The holomorphic invariance of the Segre family  implies the following key property: {\em every local biholomorphism $H:\,(M,0)\lr(\tilde M,0)$ locally transforms the associated ODEs $\mathcal E(M),\mathcal E(\tilde M)$ into each other.}

For more on the method of associated differential equations, see e.g. Sukhov \cite{sukhov1,sukhov2}, Kossovskiy-Shafikov \cite{divergence,nonminimalODE}, Kossovskiy-Lamel \cite{nonanalytic}.

\subsection{Invariants of Segre families  and the obstruction of Huang, Ji and Yau}

Let us discuss the obstruction of Huang, Ji and Yau, in detail.

The invariants of Segre family of Levi-nondegenerate hypersurfaces in $\C^2$ were first obtained by Tresse \cite{T96} and it was observed by Cartan \cite{C32} that they are induced by the CR invariants of the real submanifold $M$, but they are not sufficient for the solution of the equivalence problem of real submanifolds $M$ in $\C^2$. A particular example of this situation was constructed by Faran \cite{F80}. In fact, the relation between the invariants of Segre families and CR invariants is provided by complexification.

The first construction of CR invariants solving the equivalence problem of strongly pseudo--convex real submanifolds $M$ in $\C^2$ was obtained by Cartan in \cite{C32}, further solutions were obtained by Tanaka in \cite{T62,T75} or Chern and Moser in \cite{CM74}. These were generalized to explicit solutions of equivalence problem of Segre families by Chern \cite{C75} and Faran \cite{F80}. What is important for the solution of the equivalence problems is:

\begin{enumerate}
\item In the case of strongly pseudo--convex real submanifolds in $\C^2$, to construct a fiber bundle $\ba$ over $M$ together with a $\frak{su}(1,2)$--valued absolute parallelism $\om$ on $\ba$, i.e., $\om: T\ba\to \frak{su}(1,2)$ provides at each point $u\in \ba$ a linear isomorphism of the tangent space $T_x\ba$ with the simple Lie algebra $\frak{su}(1,2)$.
\item In the case of the corresponding Segre families, to construct a holomorphic fiber bundle $\mathcal{Y}=\ba_\C$ over the complexification $M_\C$ of $M$ together with a holomorphic $\frak{sl}(3,\C)$--valued absolute parallelism $\om_\C$ on $\ba_\C$.
\end{enumerate}

The problem of biholomorphic equivalence problem is then equivalent to equivalence problem of absolute parallelisms for which there is a uniform and well established theory, c.f. Olver \cite{O95}. In particular, for an general absolute parallelism $\omega$ on a fiber bundle $\ba$ over $M$,

\begin{itemize}
\item firstly, from the structure equation $\Omega=d\omega+[\omega,\omega]$, one obtains zero order invariants, and then
\item by differentiation of the other invariants in direction of the vector fields $\omega(\xi)=\rm{const},$ one obtains higher order (depending on number differentiations) derived invariants.
\end{itemize}

The image of the structure bundle $\ba$ in the space of all invariants of order less than $k$ is usually called the classifying set of order $k$. The main point is that the classifying sets do not depend on the choice of coordinates on $M$ and $\ba$ and two absolute parallelisms $\omega$ are (locally) equivalent if and only if they have the same (overlapping) classifying sets (of sufficiently large order $k$).

Let us emphasize that the order of the derived invariants should not be confused with the (weighted) order of the invariants in the derivations of the defining equation of the submanifold $M$/Segre family. Of course, the Bianchi and Ricci identities provide functional and differential dependence between some of the (derived) invariants. On the other hand, if one does not obtain new functionally independent invariants, then (in regular situation) one has found a maximal set of functionally independent invariants and all others are functionally dependent. 

Of course, one does not have to consider all of the  invariants (of order less than $k$), and relation $R_P(P,P_1,\dots,P_N)=0$ between the functionally dependent invariants can be seen as a possible collection of defining equations for the classifying set. Therefore, the obstructions of algebraizibility of Huang, Ji and Yau can be reformulated as follows:

\medskip

{\em If a Levi-nondegenerate real-analytic hypersurface $M\subset \C^2$ is algebraizable, then all classifying sets of $\om_\C$ are open subsets of (complex) algebraic varieties.}

\medskip

Certain (significant) modifictions shall be made when formulating analogous obstructions for classifying sets of $\om$ itself, because in this case the varieties would be {\em real algebraic}.

\subsection{Cartan connections}

In this section, we discuss the solution of the equivalence problem for strictly pseudoconvex CR-hypersurfaces in $\CC{2}$ (and their Segre families) that employ {\em Cartan connections}, i.e., the structure bundle $\ba$ is a principal fiber bundle and the absolute parallelism $\om$ is equivariant w.r.t. to the structure group. In this case, the structure bundle $\mathcal{Y}=\ba_\C$ and the Cartan connection $\om_\C$ can be directly obtained by the complexification. 
%This implies that there is an (algebraic) action of the structure group on all classifying sets, c.f. \cite[Section 1.5.4]{P}. Therefore it suffices to construct the image of $M$ in the classifying set depending on the (local) section $M\to \ba$ and then check, whether the orbit is algebraic variety. We show how to do it explicitly in Section \ref{Expl}.
We use the construction described in \v Cap and Slovak in \cite[Section 3.1, Theorem 3.1.14]{P}, because this unlike the construction in  \cite{C75,CM74,HJY01} does not require finding the equation  $w\rq{}\rq{}=\Phi(z,w,w\rq{})$. Moreover, since the relations $R_P(P,P_1,\dots,P_n)=0$ are independent on the choice of coordinates, it is simpler to use this Cartan connection for checking the the obstructions of algebraizibility of Huang, Ji and Yau.

To recall the structure groups involved, it is sufficient to recall the (maximally symmetric) models. A (local) model of such real hypersurface is a quadric 
\[\Im w=z\bz\]
 in $\C^2$ that can be identified with an open subset in a (global) model $PSU(1,2)/B$, which is the null cone in $\C P^2$, i.e., space of null lines in the standard representation of $SU(1,2)$ on $\C^3$ and $B$ is a stabilizer of a null line (and a Borel subgroup of $PSU(1,2)$). The model of the corresponding Segre family is then identified with the homogeneous space $PGL(3,\C)/B_\C$ and corresponds to the trivial second--order ODE \[w\rq{}\rq{}=0.\]

Let us summarize that a Cartan geometries $(\ba\to M,\om)$  of type $(PSU(1,2),B)$ and $(\ba_\C\to M_\C,\om_\C)$  of type $(PGL(3,\C),B_\C)$ are
\begin{itemize}
\item a (right) principal $B$--bundle  $\ba$ over $M$ and $B_\C$--bundle  $\mathcal{Y}=\ba_\C$ over $M_\C$,
\item a $B$--equivariant $\frak{su}(1,2)$--valued absolute parallelism $\om$ on $\ba$ and a $B_\C$--equivariant $\frak{sl}(3,\C)$--valued absolute parallelism $\om_\C$ on $\ba_\C$ that both reproduce the fundamental vector fields of the right action of the structure groups $B$ and $B_\C$ on $\ba$ and $\ba_\C$, respectively.
\end{itemize}

We provide the explicit construction of $\om$ and $\om_\C$ in the next section. Here we just mention that the construction involves a choice of normalization conditions  \cite[Section 3.1.12]{P} that we will not recall here. %Also let us mention that it is not a  simple task to read the precise equation $w\rq{}\rq{}=\Phi(z,w,w\rq{})$ from the Cartan geometry $(\ba_\C\to M_\C,\om_\C)$, because it involves a (lift) of a transformation of coordinates of the form $\bz=T_1(z,w,w\rq{}), \bw=T_2(z,w,w\rq{}).$ Therefore, we provide the explicit construction of the corresponding ODE's in the Section \ref{Segre}.

\section{The jet transcendence degree of a Levi-nondegenerate hypersurface}

In this section, we observe a (previously unknown) invariant of a Levi-nondegenerate real hypersurface, which appears useful for studying the algebraizability problem and, in particular, allows to resolve the Huang-Ji-Yau Conjecture. 

% Recall that for a (holomorphic near the origin) function $f(x,y),\,(x,y)\in\CC{m}\times\CC{n}$, its {\em transcendence degree} in $y$ is the minimal possible dimension $N$ of a nonzero pseudo-polynomial
%$$P(x,y,u)=a_0(x,y)+a_1(x,y)u+...+a_N(x,y)u^N$$
%annihilating $f$, i.e. such that
%\begin{equation}\Label{annih}
%P\bigl(x,y,f(x,y)\bigr)\equiv 0.
%\end{equation}
%Here $a_j(x,y)$ are {polynomial} in $y$ and analytic in $x$. If such $P$ doesn't exist, we set 
%$$N:=\infty.$$ In particular, an {\em algebraic} in $y$ function $f(x,y)$ is characterized by the property that its transcendence degree is finite.

The {\em transcendence degree} of a holomorphic at the origin (vector) function $$F(x,y)=\bigl(f_1(x,y),...,f_s(x,y)\bigr),\,\,(x,y)\in\CC{m}\times\CC{n}$$ in the (vector) variable $y$  can be defined as follows. Consider the algebra of germs at the origin of functions $P(x,y)$ polynomial in $y$ and holomorphic in $x$, and the respective  field of fractions $\mathcal K$. For a function $F(x,y)$ as above, we consider now the extension $\mathcal L/\mathcal K$ obtained by adding the components of  $F$ to $\mathcal K$. Then the  {\em transcendence degree} of $F$ in $y$ is the transcendence degree of the extension $\mathcal L/\mathcal K$. The latter, we recall, is the cardinality of the maximal algebraically independent subset $\mathcal S$ in    $\mathcal L/\mathcal K$. In particular, an extension is algebraic if and only if its transcendence degree is $0$. In accordance with that, an {\em algebraic in $y$} function $F(x,y)$ is one with the zero transdencence degree. For more on this, we refer to e.g. Lang \cite{lang}.

%The latter definition can be developed for a vector function $F=(f_1,,,.,f_m)$. Here, we we successively extend the basic field $\mathcal K$ and obtain extensions $$\mathcal L_0=\mathcal K\subset\mathcal L_1\subset...\subset\mathcal L_m$$ by adding respectively $f_1,...,f_m$. Then the {\em transcendence degree} of $F$ is the product
%$$N:=N_1\cdots N_m,$$
%where $N_j$ is the dimension of $\mathcal L_j$ considered as a linear space over $\mathcal L_{j-1}$.

For a real-analytic Levi-nondegenerate hypersurface $M\subset\CC{n+1},\,n\geq 1$ near its distinguished point $0\in M$, we consider the invariantly associated system of differential equations $\mathcal E(M)$ (given by either \eqref{wzz} or \eqref{wzkzl}). 

\begin{definition} 
The {\em jet transcendence degree} of $M$ is the transcendence degree in $$w_z=\bigl(w_{z_1},...,w_{z_n}\bigr)$$ of the function $\Phi$ as in \eqref{wzz} ($n=1$) or \eqref{wzkzl} ($n>1$). 
\end{definition}

We shall prove the invariance of the jet transcendence degree. For that,  we shall subject $\Phi$ to a local origin-preserving biholomorphis 
$$H(z,w)=(f_1,...,f_n,g)=\bigl(f(z,w),g(z,w)\bigr).$$ Then the transformation rule for $\Phi$ under $H$ is obtain by lifting $H$ to a (local) self-map $H^{(2)}$ of the bundle $J^{2,n}$ of $2$-jets of real hypersurfaces. Considering \eqref{wzz} or \eqref{wzkzl}) as a submanifold $\mathcal E$ of  $J^{2,n}$, we subject $\mathcal E$ to the transformation $H^{(2)}$, and the target submanifold gives the desired function $\tilde\Phi$ associated with the image of $M$.

To control the change of $\Phi$ here, we need more information on the prolonged map $H^{(2)}$. Let us set
$$w_j:=w_{z_j},\,\,w_{kl}:=w_{z_kz_l},\,\,w':=\left\{w_{k}\right\}_{k=1}^n,\quad w'':=\left\{w_{kl}\right\}_{k,l=1}^n.$$
Then $(z,w,w',w'')$ can be treated as a tuple of local coordinates in $J^{2,n}$.
Following now the procedure in e.g. \cite{bluman}, we see that the prolonged map 
$$H^{(2)}(z,w,w',w'')=\bigl(f(z,w),g(z,w),g^{1}(z,w,w'),g^{2}(z,w,w',w'')\bigr)$$ 

has the following properties:

\smallskip

\noindent (i) the component $g^{1}(z,w,w')$ a is birational mapping $R(z,w,w')$ in $w'$ with coefficients depending polynomially on the first derivatives of $H$;

\smallskip

\noindent (ii) the component $g^{2}(z,w,w',w'')$ has the form $$A(z,w,w_z)w_{zz}+B(z,w,w_z),$$
where $A,B$ are rational in $w'$ with coefficients depending polynomially on the first and second derivatives of $H$, and $\mbox{det}\,A\not\equiv 0$.

\smallskip

More specifically, we make use of the {\em total derivations}
$$D_k:=\frac{\partial}{\partial z_k}+w_k\frac{\partial}{\partial w}+\sum_{l=1}^n w_{kl}\frac{\partial}{\partial w_l},$$
set
$$C:=\bigl(D_kf_l\bigr)_{k,l=1}^n$$
(the invertibility of $H$ implies here $\mbox{det}\,C\not\equiv 0$),
and then
$$g^{1}=C^{-1}\cdot Dg,\quad Dg:=(D_1g,...,D_ng)^T.$$
Further, we have
$$g^{2}=C^{-1}\cdot Dg^1,\quad Dg^1:=\bigl(D_kg^1_l\bigr)_{k,l=1}^n.$$

Gathering the information important for our particular purposes, we conclude that the transformation rule for $\Phi$ has the following form:

\begin{equation}\Label{transrule}
\Phi(z,w,w')=Q(z,w,w')\cdot\tilde\Phi\bigl(f(z,w),g(z,w),R(z,w,w')\bigr)+S(z,w,w'),
\end{equation}
where $Q,R,S$ are all rational functions in $w'$ (and $Q\not\equiv 0$) with coefficients being polynomials depending on derivatives of $f,g$ of order $1$ and $2$. 
For the convenience of the reader, we provide here the explicit formula for the case $n=1$:
\begin{multline}\label{trule}
\Phi(z,w,w\rq{})=\frac{1}{J}\bigg((  f_z+w\rq{}  f_w)^3\cdot\tilde\Phi\left(  f(z,w),  g(z,w),\frac{  g_z+w\rq{}  g_w}{  f_z+w\rq{}  f_w}\right)+\\
I_0(z,w)+I_1(z,w)w\rq{}+I_2(z,w)(w\rq{})^2+I_3(z,w)(w\rq{})^3\bigg),
\end{multline}
where $J:=  f_z  g_w-  f_w  g_z$ is the Jacobian determinant of the transformation and
\beq\begin{aligned}
I_0 &=  g_z  f_{zz}-  f_z  g_{zz}\\
I_1 &=  g_w  f_{zz}-  f_w  g_{zz}-2  f_z  g_{zw}+2  g_z  f_{zw}\\
I_2 &=  g_{z}  f_{ww}-  f_z  g_{ww}-2  f_w  g_{zw}+2  g_w  f_{zw}\\
I_3 &=  g_w  f_{ww}-  f_w  g_{ww}.
\end{aligned}\eeq 

The explicit nature of the transformation rule \eqref{transrule} implies the invariance under local biholomorphisms of algebraic relations between the components of $\Phi$ with coefficiants from the field $\mathcal K$ as above (we take here into account the birationality of $R$), and we immediately obtain:
\begin{proposition}\Label{invar}
The jet transcendence degree of a Levi-nondegenerate hypersurface is invariant under local biholomorphisms. 
\end{proposition}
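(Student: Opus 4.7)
My approach is to read the transformation rule \eqref{transrule} algebraically as an identification of field extensions, and then deduce the equality of jet transcendence degrees on the two sides as equality of extension degrees. The two structural features of \eqref{transrule} that do the work are: the base change $(\tilde z,\tilde w)=(f(z,w),g(z,w))$ is holomorphic, hence preserves the ``analytic in $(z,w)$'' condition on coefficients; and the first-jet change $\tilde w'=R(z,w,w')$ is birational in $w'$, hence carries polynomial dependence in $\tilde w'$ to rational dependence in $w'$ and vice versa. Together, these are exactly what is required to match the ring-theoretic data entering the definition of the transcendence degree.

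Concretely, let $\mathcal{K}$ denote the fraction field of the ring of germs at the origin that are polynomial in $w'$ with analytic coefficients in $(z,w)$, and $\tilde{\mathcal{K}}$ the analogous field on the target side. The main technical step I expect is to check that pullback by $H^{(2)}$ gives a field isomorphism $H^{(2)*}:\tilde{\mathcal{K}}\xrightarrow{\sim}\mathcal{K}$: a polynomial in $\tilde w'$ with analytic coefficients in $(\tilde z,\tilde w)$ pulls back (via $\tilde z=f$, $\tilde w=g$, $\tilde w'=R$) to a rational function in $w'$ with analytic coefficients in $(z,w)$, hence lies in $\mathcal{K}$; surjectivity follows by applying the same construction to $H^{-1}$, which presents $w'$ as a rational function of $(\tilde z,\tilde w,\tilde w')$ with analytic coefficients. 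This is where one uses the birationality of $R$ in an essential way, and is the one point that requires a little care rather than a formal rewrite.

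Finally I would observe that this isomorphism identifies the extension $\tilde{\mathcal{K}}(\tilde\Phi_{11},\ldots,\tilde\Phi_{nn})$ with $\mathcal{K}(\Phi_{11},\ldots,\Phi_{nn})$: by \eqref{transrule} each $H^{(2)*}\tilde\Phi_{kl}$ is a $\mathcal{K}$-rational expression in the $\Phi_{ij}$ (one inverts the coefficient $Q$, nonzero by the invertibility of $H$), and conversely each $\Phi_{kl}$ is a $\mathcal{K}$-rational expression in the $H^{(2)*}\tilde\Phi_{ij}$. The two extensions therefore have the same degree, finite or infinite, built up as the product $N_1\cdots N_m$ of successive extension degrees of the definition; this is precisely the jet transcendence degree, so the invariance follows. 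If one prefers an explicit, pseudo-polynomial-level realization of the same argument: given an annihilator $\sum_{k=0}^{N}\tilde a_k(\tilde z,\tilde w,\tilde w')u^k$ of $\tilde\Phi$ with $\tilde a_N\not\equiv 0$, substituting $(\tilde z,\tilde w,\tilde w')=(f,g,R)$ and $u=Q^{-1}(\Phi-S)$, multiplying through by $Q^N$ and clearing the resulting denominators in $w'$ produces an annihilator of $\Phi$ of degree $\le N$ over $\mathcal{K}$; its leading coefficient is a nonzero multiple of $\tilde a_N(f,g,R)$, which does not vanish identically because $R$ is birational in $w'$. The reverse inequality comes symmetrically from $H^{-1}$.
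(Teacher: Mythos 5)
Your argument is correct and follows essentially the same route as the paper, which deduces the proposition directly from the transformation rule \eqref{transrule} together with the birationality of $R$ in $w'$ and the nonvanishing of $Q$; you have simply spelled out the field-theoretic bookkeeping (and the explicit denominator-clearing version) that the paper leaves implicit. No gap.
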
 

The following (simple) proposition illuminates the connection between the jet transcendence degree and the algebraizability. 

\begin{proposition}\label{obstruct}
If a real-analytic submanifold $M\subset \C^{n+1}$ is algebraizable, then $M$ has the zero transcendence degree, i.e. the defining function $\Phi$ of the associated system \eqref{wzz} or \eqref{wzkzl} is algebraic in $w\rq{}=(w_{z_1},...,w_{z_n})$.
\end{proposition}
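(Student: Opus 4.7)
The plan is to reduce the statement to a direct computation using the associated PDE system, exploiting the invariance of the jet transcendence degree established in Proposition~\ref{invar}. Since $M$ is algebraizable, there is a local biholomorphism taking $M$ to an algebraic hypersurface $\tilde M\subset \CC{n+1}$. By \autoref{invar}, the jet transcendence degrees of $M$ and $\tilde M$ coincide, so it suffices to verify that the right hand sides $\Phi_{kl}$ of the associated PDE system $\mathcal E(\tilde M)$ are algebraic in $w'=(w_{z_1},\dots,w_{z_n})$; this clearly implies that their transcendence degree in $w'$ is finite.

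Next, I would unwind the construction of $\mathcal E(\tilde M)$ recalled in Section 2.1. Starting from the real defining equation of $\tilde M$, the holomorphic implicit function theorem produces the complex defining equation $w=\rho(z,\bar z,\bar w)$, and an important point is that, when the original real defining polynomial is algebraic, $\rho$ is algebraic in all its arguments. This is because the holomorphic implicit function theorem applied to an algebraic equation yields an algebraic solution (the solution lies in the algebraic closure of the field of rational functions, by Noether normalization/standard elimination theory). Hence the graphing functions of the Segre varieties \eqref{segredf} and their derivatives \eqref{segreder} are algebraic in $(z,\bar a,\bar b)$.

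Then I would invert the system \eqref{segredf}--\eqref{segreder} using the same algebraic version of the implicit function theorem. Since the Levi determinant of $\tilde M$ is itself an algebraic function and is nonzero at the reference point, the inversion produces functions $\bar a_j = A_j(z,w,w')$ and $\bar b = B(z,w,w')$ which are algebraic in $(z,w,w')$. Substituting these back into the second derivatives, as in \eqref{segreder2}, gives
\[
\Phi_{kl}(z,w,w')=\rho_{z_kz_l}\bigl(z, A(z,w,w'), B(z,w,w')\bigr),
\]
a composition of algebraic functions, hence algebraic in $(z,w,w')$, and in particular algebraic in $w'$. This gives a polynomial annihilating relation of finite degree in the last slot, so the transcendence degree of each $\Phi_{kl}$ in $w'$ is finite, and by the multiplicative definition, so is that of the vector $\Phi$.

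The main (minor) obstacle is making the algebraic implicit function theorem step fully rigorous: one has to verify that when one solves an algebraic system locally by the analytic implicit function theorem, the resulting analytic germs are in fact restrictions of algebraic functions. This is a standard fact (the solution is defined by the vanishing of a resultant), but it is the only nontrivial ingredient; everything else is a direct unwinding of the definitions in Section 2.1 combined with \autoref{invar}.
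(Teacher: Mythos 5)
Your proposal is correct and follows essentially the same route as the paper: reduce to an algebraic target $\tilde M$ via \autoref{invar}, then observe that the explicit construction of the associated system in Section 2.1 (complex defining equation, Segre graphing functions, implicit-function inversion for $A$, $B$, and composition) only involves operations preserving algebraicity. The paper compresses the second step into the phrase ``elementary properties of algebraic functions,'' whereas you spell out the algebraic implicit function theorem; this is the same argument in more detail.
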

\begin{proof}
We first observe that, for an algebraic target, the associated function $\tilde\Phi$ has a finite transcendence degree in $w'$, i.e. is algebraic in $w'$.  (In fact, such a $\tilde\Phi$  is algebraic in all its variables). This follows directly from the explicit procedure for constructing the associated function $\Phi$ described in Section 2.1 and elementary properties of algebraic functions. We now employ \autoref{invar} to conclude that the original associated function $\Phi$ also has the zero transcendence degree in $w'$, i.e. $M$ has the zero transcendence degree, as required.
\end{proof}

\section{Explicit construction of invariants of Segre families}\label{Expl}

\subsection{Construction of the Cartan connection}

The main advantage of the use of Cartan connection in the construction of invariants is that it is sufficient to choose a (local) section $s: M_\C\to \ba_\C$ to pullback the computations to $M_\C$ and then use the structure group to obtain the final result on $\mathcal{Y}=\ba_{\C}.$ We describe the relation of what follows with the construction in the CR case is in Appendix \ref{appB}.

We start with a rigid real-analytichypersurface $\Re w=H(z,\bz)$ in $\C^2$ satisfies that $H_{z \bz}$ is everywhere nonzero. In Appendix \ref{Ap1}, we discuss, how the results in this section can be used in the case of general defining function. We start with construction of the pullback $s^*\om_\C$ of the Cartan connection $\om_\C$.

We use the complex coordinates $(z,\bz,v,\partial_z,\partial_{\bz},\partial_v)$ on $TM_\C$ and we consider the vector fields
\begin{align*}
X&=\partial_z-iH_z\partial_v,\\
\bX&=\partial_{\bz}+iH_{\bz}\partial_v,\\
Y&=-2iH_{z\bz}\partial_v,
\end{align*}
where we omit the arguments of $H$ and use the subscript for the partial derivatives. Observe, that $X,\bar X$ generate the complexification of the complex tangent space $TM\cap i(TM)$ and that $[X,\bar X]=-Y$, which provides the Heisenberg Lie algebra complement to $\fb_\C$ in $\frak{sl}(3,\C)$ as required in \cite[Section 3.1]{P}. Therefore, we use the corresponding coframe $(j,l,\bl)$ dual to the frame $(Y,X,\bX)$ and compute that
\begin{align*}
dj &=P j\wedge l+\bP j\wedge \bl+l\wedge \bl,\\
 dl &=0,\\
  d\bl&=0,\\
P\cdot Y&:=-[X,Y]=\frac{H_{zz \bz}}{H_{z \bz}}\cdot Y.
\end{align*}
Note that a consequence of Jacobi identity is $P_{\bX}=\bP_X$, i.e., $P_{\bX}$ is real on the real form $\ba\subset \ba_\C$, where the subscript is used for the derivatives in the direction of the vector fields. For general defining equations the derivations in direction of vector fields of $P$ would depend on the ordering, but our assumptions imply:
\begin{align*}
P_{X}&=P_{z}=\frac{H_{zzz\bz}H_{z \bz}-H_{zz \bz}H_{zz\bz}}{H_{z \bz}^2},\\
P_{\bX}&=P_{\bz}=\frac{H_{zz \bz\bz}H_{z \bz}-H_{zz \bz}H_{z\bz \bz}}{H_{z \bz}^2},\\
P_Y&=P_v=0,
\end{align*}
and thus the nonholomic frame $(Y,X,\bX)$ acts on $P$ in the same way as the holonomic frame $(\partial_z,\partial_{\bz},\partial_v)$. However, we present the formula for the pullback $s^*\om_\C$ using the nonholomic frame $(Y,X,\bX)$, because such formulas also applies for general strongly pseudo--convex hypersurfaces in $\C^2$. To obtain the following formula for $s^*\om_\C$, we have to chosen the section $s$ in a particular way and then checked the normalization condition of \cite[Section 3.1.12]{P} using the Differential geometry package in Maple:
{\tiny
\begin{align*}
j\left[ \begin{array}{ccc}
-\frac{1}{12}P_{\bX}& \frac16(PP_{\bX}+2P_{Y}-P_{X\bX})&\frac16(PP_{\bX\bX}-P\bP P_{\bX}+\frac{11}{8}P_{\bX}^2-P_{X\bX\bX}+\bP P_{X\bX}-\frac12 P_{Y\bX}) \\
0&\frac{1}{6}P_{\bX}&-\frac16(\bP P_{\bX}-2\bP_{Y}-P_{\bX\bX})\\
1& 0& -\frac{1}{12}P_{\bX}
\end{array}\right]\\
+l\left[ \begin{array}{ccc}
-\frac{2}{3}P&0&\frac1{12}(PP_{\bX}-4P_{Y}-P_{X\bX}) \\
1&\frac{1}{3}P&\frac14P_{\bX}\\
0& 0& \frac{1}{3}P
\end{array}\right]+\bl\left[ \begin{array}{ccc}
-\frac{1}{3}\bP&\frac14P_{\bX}&\frac1{12}(-\bP P_{\bX}-4\bP_{Y}+P_{\bX\bX}) \\
0&-\frac{1}{3}\bP&0\\
0& 1& \frac{2}{3}\bP
\end{array}\right].
\end{align*}}

A full Cartan connection $\om_C$ on a global trivialization $$(x,b)\in M_\C\times B_\C$$ of $\ba_\C$ is obtained by $$\om_\C(x,b)=Ad(b^{-1})s^*\om_\C(x)+\om_B(b),$$ where $\om_B$ is the Maurer--Cartan form on $B_\C$, c.f. \cite[Section 1.5.4]{P}. To make sure that the normalization condition of \cite[Section 3.1.12]{P} are indeed satisfied in our situation, it suffices to consider a general pullback $$s'(x)=s(x)\phi(x)$$ for $\phi: M \to B_\C$ and check that he obtains the same classifying sets for $$s'^*\om=Ad(\phi(x)^{-1})s^*\om+\delta\phi,$$ where $\delta$ is the left logarithmic derivative of $\phi$. %In particular, the images of $M$ in the classifying sets of $\om_\C$ will differ by algebraic actions of the images $\phi$.

\subsection{Construction of the invariants}

If one evaluates the structure equation $d\omega_\C+[\omega_\C,\omega_\C]$ on the constant vector fields $\om_\C^{-1}(X),\om_\C^{-1}(Y)$ for $X,Y\in \frak{sl}(3,\C)$, then one obtains the curvature function $$\kappa: \ba_\C\to \wedge^2 (\frak{sl}(3,\C)/\fb_\C)^*\otimes \frak{sl}(3,\C)$$ of the Cartan geometry $(\ba_\C\to M_\C,\om_\C)$ of type $(PGl(3,\C),B_\C)$. The curvature function $\kappa$ is $B_\C$--equivariant, where the action of $B_\C$ on $\wedge^2 (\frak{sl}(3,\C)/\fb_\C)^*\otimes \frak{sl}(3,\C)$ is induced by the adjoint action on each component. This means that it suffices to compute the pullback $$s^*\kappa: M_\C\to \wedge^2 (\frak{sl}(3,\C)/\fb_\C)^*\otimes \frak{sl}(3,\C).$$ Since the pullback $s^*\kappa$ is given by evaluating $ds^*\omega+[s^*\omega,s^*\omega]$ on the projections of the restrictions of constant vector fields to image of $s$, we get the following formula for  $s^*\kappa$:
{\tiny
\begin{align*}
s^*\kappa(\left[ \begin{array}{ccc}
0& 0&0 \\
x&0&0\\
y& \bar x&0
\end{array}\right],
\left[ \begin{array}{ccc}
0& 0&0 \\
u&0&0\\
v& \bar u&0
\end{array}\right])=(xv-yu)\left[ \begin{array}{ccc}
0&I_1&I_2\\
0&0&0\\
0& 0& 0
\end{array}\right]+(\bar xv-y\bar u)\left[ \begin{array}{ccc}
0&0&I_4\\
0&0&I_3\\
0& 0& 0
\end{array}\right],
\end{align*}}
where
\begin{align*}
I_1&=\frac16(-P_{XX\bX}+3P P_{X\bX}+2P_{YX}+P_{X}P_{\bX}-2P^2P_{\bX}-2PP_{Y})\\
I_2&=(I_1)_X\\
%\frac16(-P_{XX\bX\bX}+3PP_{X\bX\bX}+P_{XX\bX}\bP-3P\bP P_{X\bX}+4P_{\bX}P_{X\bX}-4PP_{\bX}^2-\bP P_{X}P_{\bX}&+P_{X}P_{\bX\bX}+2P^2\bP P_{\bX}-2P^2P_{\bX\bX}+2P_{YY}-P_{Y}P_{\bX})\\
I_3&=\frac16(P_{\bX\bX\bX}+2\bP_{Y\bX}-3P_{\bX\bX}\bP-\bP_{\bX}P_{\bX}+2\bP^2P_{\bX}-2\bP \bP_{Y})\\
I_4&=(I_3)_{\bX}.
\end{align*}
Note, that the differential relations between the invariants are a consequence of Bianchi identity, c.f. \cite[Section 1.5.9]{P}.

Further, one considers the iterated derivatives of $\kappa$ in directions of constant vector fields. In other words, the remaining invariants are function 
\[\ba_\C\to\bigotimes_{i\geq 0}^i\frak{sl}(3,\C)^*\otimes \wedge^2 (\frak{sl}(3,\C)/\fb_\C)^*\otimes \frak{sl}(3,\C),\]
 which are again $B_\C$--equivariant. The pullback of derivative in the direction of constant vector field $$\om_\C^{-1}(X)=\xi+\tau$$ for $\xi$ tangent to image of $s$ and $\tau$ vertical is derivative in direction $s^*\xi$ of the projection of $\xi$ of the pullback minus pullback of the (adjoint) action of $\om_\C(\tau)$. In particular, $$\om^{-1}(X).\kappa=s^*\xi. s^*\kappa-Ad(\om(\tau)).s^*\kappa$$ for the induced adjoint action of $\fb_{\C}$ on $\wedge^2 (\frak{sl}(3,\C)/\fb_\C)^*\otimes \frak{sl}(3,\C)$ and we obtain the following formula:

\begin{align*}
&D_{\left[ \begin{array}{ccc}
0& 0&0 \\
x&0&0\\
y& \bar x&0
\end{array}\right]+b}\kappa(U,V)=x\kappa(U,V)_X+\bar x\kappa(U,V)_{\bX}+y\kappa(U,V)_Y\\
&-(-xs^*\om(X)-\bar xs^*\om(\bX)-ys^*\om(Y)+b+\left[ \begin{array}{ccc}
0& 0&0 \\
x&0&0\\
y& \bar x&0
\end{array}\right]).\kappa(U,V),
\end{align*}
where $x,\bar x,y\in \C$ and $b\in \fb_\C$ and $.$ is the natural action of $\fb_\C$ on $\wedge^2 (\frak{sl}(3,\C)/\fb_\C)^*\otimes \frak{sl}(3,\C)$ induced by adjoint action. This formula naturally extends to iterated fundamental derivatives.

Then components of $\kappa$ and its derivatives  (of order less than $k$) on $\mathcal{Y}=\ba_\C$ provide all invariants (of order less than $s$). Among them, we can pick some functionally independent set $P_1,\dots,P_n$ of invariants and test the obstruction of Huang, Ji and Yau.

\subsection{Algebraicity of invariants in the Main Theorem}

Consider a rigid hypersurface $$\Re w=H(z,\bz)$$ in $\C^2$ satisfying that $H_{z \bz}$ is algebraic and everywhere nonzero. If we look at the computations in the previous section we see that the function $P$ and all its derivations are algebraic and thus all invariants are algebraic. This implies the first part of the Main Theorem.

In the case of example \eqref{exmp1}, we compute that $$H_{z \bz}=1+\frac{1}{1+z^2}+\frac{1}{1+\bz^2}$$ is algebraic and thus all the invariants computed as above are algebraic and nonzero as $$H_{zz \bz}=\frac{-2z}{(1+z^2)^2},\quad H_{z\bz \bz}=\frac{-2\bz}{(1+\bz^2)^2},\quad H_{zz \bz\bz}=0$$ lead to
{\tiny
\[
I_1={\frac {8{ z}{ \bz} \left( 3{{ z}}^{6}{{
 \bz}}^{4}+12{{ z}}^{6}{{ \bz}}^{2}-6{{ z}}^{4}{{ 
\bz}}^{4}+12{{ z}}^{6}-15{{ z}}^{4}{{ \bz}}^{2}-10{{
 z}}^{2}{{ \bz}}^{4}-6{{ z}}^{4}-36{{ z}}^{2}{{ 
\bz}}^{2}-2{{ \bz}}^{4}-29{{ z}}^{2}-11{{ \bz}}^{2}-12
 \right) }{3 \left( {{ z}}^{2}+1 \right) ^{2} \left( {{ z}}^{2}
{{ \bz}}^{2}+2{{ z}}^{2}+2{{ \bz}}^{2}+3 \right) ^{4}}}.
\]}
We can compute also the further invariants that are also  algebraic, i.e., the obstructions of Huang, Ji and Yau are satisfied. However, they are too complicated to be presented explicitly.

\section{Proof of the Main Theorem}

%We start by proving \autoref{obstruct}
%
%\begin{proof}[Proof of \autoref{obstruct}] We employ the transformation rule for a mapping
%$$(z,w)\mapsto \bigl(f(z,w),g(z,w)\bigr)$$ 
%of second order ODEs $w\rq{}\rq{}=\Phi(z,w,w\rq{})$ and $w\rq{}\rq{}=\Phi^*(z,w,w\rq{})$ (e.g.,\cite{bluman}):
%\begin{multline}\label{trule}
%\Phi(z,w,w\rq{})=\frac{1}{J}\bigg((  f_z+w\rq{}  f_w)^3\Phi^*\left(  f(z,w),  g(z,w),\frac{  g_z+w\rq{}  g_w}{  f_z+w\rq{}  f_w}\right)+\\
%I_0(z,w)+I_1(z,w)w\rq{}+I_2(z,w)(w\rq{})^2+I_3(z,w)(w\rq{})^3\bigg),
%\end{multline}
%where $J:=  f_z  g_w-  f_w  g_z$ is the Jacobian determinant of the transformation and
%\beq\begin{aligned}
%I_0 &=  g_z  f_{zz}-  f_z  g_{zz}\\
%I_1 &=  g_w  f_{zz}-  f_w  g_{zz}-2  f_z  g_{zw}+2  g_z  f_{zw}\\
%I_2 &=  g_{z}  f_{ww}-  f_z  g_{ww}-2  f_w  g_{zw}+2  g_w  f_{zw}\\
%I_3 &=  g_w  f_{ww}-  f_w  g_{ww}.
%\end{aligned}\eeq
%We immediately see from here that in case $\Phi^*$ is algebraic in its last argument, the same must hold for $\Phi$ too: $w\rq{}$ comes only into the last argument in $\Phi^*$, and the expression in the third argument of $\Phi^*$ is algebraic in  $w\rq{}$ (in fact rational). At the same time,  if starting with a real-analytic Levi-nondegenerate hypersurface $M\subset\CC{2}$ which is mappable onto a real-algebraic hypersurface $M^*\subset\CC{2}$,   it immediately follows from the associated ODE  procedure in Section 2.1 that $\Phi^*$ must be algebraic in all its arguments. Thus $\Phi$ is algebraic in its last argument, as required. 
%
%\end{proof}

We are now in the position to prove our main result.

\begin{proof}[Proof of Main Theorem]
The first assertion of the Main Theorem is addressed in Section 4.3

For the second assertion, we argue by contradiction and assume $M$ is algebraizable. Let us return to the associated ODE procedure in Section 2.1. Note that the hypersurface $M$ is rigid, thus for the associated ODE, 
$$\Phi(z,w,w\rq{})=\Phi(z,w\rq{}),$$ and for the function $A$, as in \eqref{AB}, $$A(z,w,w\rq{})=A(z,w\rq{}).$$
Since $H_z(z,\bar z)$ is non-algebraic in $\bar z$, the above function $A(z,w\rq{})$ is also non-algebraic in $w\rq{}$ (otherwise $H_z$ would need to be algebraic in $\bar z$). Now, we have 
\beq\label{identity}
\Phi(z,w\rq{})=H_{zz}\bigl(z,A(z,w\rq{})\bigr).
\eeq
In view of \autoref{obstruct}, $\Phi(z,w\rq{})$ is algebraic in $w\rq{}$. Recall finally that $H_{zz}$ is algebraic in $\bar z$.  Now, solving \eqref{identity} for $A$ (this is possible in view of $H_{zz\bar z}\not\equiv 0$), we conclude that $A(z,w\rq{})$ has to be algebraic in $w\rq{}$, which is a contradiction. The theorem is proved.

\end{proof}

For example, $$w\rq{}=\bz+\frac{\bz}{1+z^2}+\arctan{\bz}$$ is not algebraic in the case of example \eqref{exmp1}, but $$w\rq{}\rq{}=\frac{-2z\bz}{(1+z^2)^2}$$ is algebraic. Note that $I_1$ and further invariants will not be algebraic in the $(z,w\rq{})$ coordinates, but the relations between the invariants remain unchanged and thus this is an counterexample for the obstruction of Huang, Ji and Yau to be sufficient for algebraizibility.

\appendix

\section{The Cartan connection $\om$ of type $(PSU(1,2),B)$}\label{appB}

Let us recall, how to obtain the real form $(\ba\to M,\om)$ from the Cartan connection $\om_\C$ we constructed in Section \ref{Expl}.

Firstly, if we consider the pointwise complexifications of the linear isomorphisms $\om(u)$ for all $u\in \ba$, then they are obtained precisely by the formulas in Section \ref{Expl}, because the arguments of the functions in the formula play no role (if one interprets the $\partial_z$ and $\partial_{\bz}$ in the usual way). In particular, the formulas are valid without assumption that defining functions are real analytic, but it suffices that all the derivatives in the formulas are defined.

To get $\om$, we need to choose the realization of the real form $\frak{su}(1,2)\subset \frak{sl}(3,\C)$. For this, it suffices to choose that $l$ and $\bl$ are conjugated (as the notation suggests) and that $j$ is purely imaginary. This way we obtain that the real form is realized $\frak{su}(1,2)$ by the following matrices:
$$\left[ \begin{array}{ccc}
x_4+ix_5& x_6+ix_7&ix_8 \\
x_1+ix_2&-2x_5&x_6-ix_7\\
ix_3& x_1-ix_2&-x_4+ix_5.
\end{array}\right]$$
Let us emphasize that since frame $(Y,X,\bX)$ is not holonomic and $P_{\bX}$ is real, it is not on first sight visible that the formulas in Section \ref{Expl} are compatible with this real form. For example, if the reader would like to check that the invariants $I_1$ and $I_3$ are conjugated, then he needs to rearrange the derivations after conjugation using the Jacobi identity to get the same formulas.

\section{Coframe for general hypersurfaces in $\C^2$}\label{Ap1}

Let us consider a strongly pseudo--convex real-analytichypersurface given by defining equation $u=F(z,\bz,v),$ where $u=\Re w$ and $v=\Im w$. In the setting of Section \ref{Expl}, we start we the following frame:
\begin{align*}
X&=\partial_z+A\partial_v,\\
\bX&=\partial_{\bz}+\bar A\partial_v\\
Y&=-(\bar A_z-A_{\bz}+A\bar A_v-\bar A_v)\partial_v,
\end{align*}
where $A:=-i\frac{F_z}{1+iF_v}$ with the arguments of $F$ omitted and the subscript used for the partial derivatives. Observe, that $X,\bar X$ generate the complexification of the complex tangent space and that $[X,\bar X]=-Y$, which provides the required Heisenberg Lie algebra complement to $\fb_\C$ in $\frak{sl}(3,\C)$.

Therefore, we use the corresponding coframe $(j,l,\bl)$ dual to the frame $(Y,X,\bX)$ and compute that
\begin{align*}
dj &=P j\wedge l+\bP j\wedge \bl+l\wedge \bl,\\
 dl &=0,\\
  d\bl&=0,\\
P\cdot Y&:=-[X,Y]=\frac{(\bar A_{vv}A-A_{vv}\bar A+2\bar A_{zv}-A_{\bz v})A}{\bar A_{z}-A_{\bz}+A\bar A_{v}-\bar AA_{v}}+\\
&+\frac{(A_{\bz}-A\bar A_{v}+\bar AA_{v}-2\bar A_{z})A_{v}+A_{z}\bar A_{v}-\bar AA_{zv}+\bar A_{zz}-A_{z\bz}}{\bar A_{z}-A_{\bz}+A\bar A_{v}-\bar AA_{v}}\cdot Y. 
 \end{align*}
Note that a consequence of Jacobi identity is again $P_{\bX}=\bP_X$.

The Cartan connection $\om_\C$ and the corresponding invariants is then obtained by the formulas in Section \ref{Expl}. We emphasize that the formulas take into account that the frame $(Y,X,\bX)$ is not holonomic.


\begin{thebibliography}{99}


\bibitem[BER99]{ber}  M. S. Baouendi, P. Ebenfelt, L. P. Rothschild.
Real Submanifolds in Complex Space and Their Mappings.
Princeton University  Press, Princeton Math. Ser. {\bf 47},
Princeton, NJ, 1999.

\bibitem[BK89]{bluman} G.~Bluman and S.~Kumei. Symmetries and differential equations. Applied Mathematical Sciences, 81. Springer-Verlag, New York, 1989.

\bibitem[CS09]{P}   \v Cap, A.,  Slov\'ak, J., Parabolic Geometries I:
Background and General Theory, Amer. Math. Soc., 2009.

\bibitem[Ca32]{C32} Cartan, E., {\it Sur la géométrie pseudo-conforme des hypersurfaces de deux variables com- plexes, I}. Ann. Math. Pura Appl. (4) Il (1932), 17-90, (or Oeuvres II, 2, 1231-1304); II, Ann. Scuola Norm. Sup. Pisa (2) 1 (1932), 333-354 (or Oeuvres III, 2, 1217-1238).

\bibitem[Ch75]{C75} Chern, S. S., {\it On the projective structure of a real hypersurface in $\CC{n+1}$}, Math. Scand. 36 (1975), 581–600.

\bibitem[CM74]{CM74} Chern, S. S. and Moser, J. K., {\it Real hypersurfaces in complex manifolds,} Acta Math. 133 (1974), 219–271.

\bibitem[Fa80]{F80} Faran, J., {\it Segre families and real hypersurfaces}, Invent. Math. 60 (1980), 135–172.

\bibitem[Fo04]{forstnericalg} F. Forstneric, {\it Most real-analytic Cauchy-Riemann manifolds are nonalgebraizable}, Manuscripta Math. {\bf 115} (2004), no. 4, 489-494.

\bibitem[HJY01]{HJY01} Huang, X., Ji, S., Yau, S. S. T., {\it An example of a real-analytic strongly pseudoconvex hypersurface which is not holomorphically equivalent to any algebraic hypersurface}. Ark. Mat. 39 (2001), no. 1, 75--93.

\bibitem[KL18]{nonanalytic} I.\,Kossovskiy,  B.\,Lamel. {\it On the analyticity of CR-diffeomorphisms.} Amer. J. Math. 140 (2018), no. 1, 139--188.

\bibitem[KS16a]{divergence} I. Kossovskiy and R. Shafikov. 
{\it Divergent CR-equivalences and meromorphic differential
equations.} J.~Eur. Math. Soc. (JEMS) 18 (2016), no. 12, 2785--2819.

\bibitem[KS16b]{nonminimalODE} I. Kossovskiy and R. Shafikov. {\it Analytic
Differential Equations and  Spherical Real Hypersurfaces.} J.~Differential Geom. 102 (2016), no. 1, 67--126.

\bibitem[La02]{lang} S.Lang. Algebra. Graduate Texts in Mathematics, 211. 2002.

\bibitem[Ol95]{O95} Olver, P.J., Equivalence, Invariants and Symmetry, Cambridge University Press, Cambridge, UK, 1995

\bibitem[Se32]{segre} B. Segre. {\em Questioni geometriche legate colla teoria delle funzioni di due variabili
complesse.} Rendiconti del Seminario di Matematici di Roma, II,
Ser. 7 (1932), no. 2, 59-107.

\bibitem[Su01]{sukhov1}  A. Sukhov. {\em Segre varieties and Lie symmetries.}
Math. Z. {\bf 238} (2001), no. 3, 483-492.

\bibitem[Su03]{sukhov2} A. Sukhov {\em On transformations of analytic
CR-structures.} Izv. Math. 67 (2003), no. 2, 303--332

\bibitem[Ta62]{T62} Tanaka, N., I. {\it On the pseudo-conformal geometry of hypersurfaces of the space of n complex variables}, J. Math. Soc. Japan14, 397–429 (1962); II. Graded Lie algebras and geometric structures, Proc, U.S.-Japan Seminar in Differential Geometry, pp. 147–150, 1965

\bibitem[Ta75]{T75} Tanaka, N., {\it A differential geometric study on strongly pseudoconvex manifolds,} Lectures in Math., vol. 9, Kyoto Univ., 1975.

\bibitem[Ta96]{T96} Tresse, A.: Détermination des invariants ponctuels de l'équation differentielles ordinaire du second ordre $y''=\om(x, y, y')$, Preisshr. Fürstlich Jablon Ges, Leipzig: Hirzel 1896


\bibitem[We77]{websteralg}  S. Webster. {\em On the mappings problem for algebraic
real hyprsurfaces}, Invent. Math., {\bf 43} (1977), no. 1, 53-68.



\bibitem[Za08]{zaitsevquad} D. Zaitsev, {\it Obstructions to embeddability into hyperquadrics and explicit examples}, {\it Math. Ann.}, {\bf 342} (2008), no. 3, 695-726.

 \bibitem[Za99]{zaitsevalg}  D.~Zaitsev. {\it Algebraicity of local holomorphisms between real-algebraic submanifolds of complex spaces.}  Acta Math. 183 (1999), 273--305.



\end{thebibliography}
\end{document}